\newtheorem{theorem}{Theorem}[section]
\newtheorem{lemma}[theorem]{Lemma}
\newtheorem{corollary}[theorem]{Corollary}
\theoremstyle{definition}
\theoremstyle{remark}
\newtheorem{remark}[theorem]{Remark}
\begin{document}
\setcounter{page}{1}

\title[Some Inequalities for the Ratios of Generalized Digamma Functions]{Some Inequalities for the Ratios of Generalized Digamma Functions}

\author[Kwara Nantomah]{Kwara Nantomah$^{*}$$^1$}

\address{$^{1}$ Department of Mathematics, University for Development Studies, Navrongo Campus, P. O. Box 24, Navrongo, UE/R, Ghana. }
\email{\textcolor[rgb]{0.00,0.00,0.84}{mykwarasoft@yahoo.com, knantomah@uds.edu.gh}}


\subjclass[2010]{33B15, 26A48.}

\keywords{digamma function, $(q,k)$-digamma function, $(p,q)$-digamma function,  Inequality}

\begin{abstract}
Some inequalities for the ratios of generalized digamma functions are presented. The approache makes use of the series representations of the $(q,k)$-digamma and $(p,q)$-digamma functions.
\end{abstract} \maketitle

\section{Introduction and Preliminaries}
\noindent
The classical Euler's Gamma function $\Gamma(t)$ and the digamma function $\psi(t)$  are commonly  defined as
\begin{equation*}\label{eqn:gamma-digamma}
\Gamma(t)=\int_0^\infty e^{-x}x^{t-1}\,dx, \qquad \psi(t)=\frac{d}{dt}\ln \Gamma(t) =\frac{\Gamma'(t)}{\Gamma(t)},                                     \quad t>0
\end{equation*}

\noindent
In 2005, D\'{i}az and Teruel  \cite{Diaz-Teruel-2005} defined the  $(q,k)$-Gamma function, $\Gamma_{q,k}(t)$ as 
\begin{equation*}\label{eqn:(q,k)-gamma}
\Gamma_{q,k}(t)=\frac{  (1-q^{k})_{q,k}^{\frac{t}{k}-1}}  {(1-q)^{\frac{t}{k}-1}}=
\frac{ (1-q^{k})_{q,k}^{\infty}}  { (1-q^{t})_{q,k}^{\infty}(1-q)^{\frac{t}{k}-1}} ,\quad t>0, \, k>0, \, q\in(0,1).
\end{equation*}

\noindent
with the  $(q,k)$-digamma function, $\psi_{q,k}(t)$  is defined as
\begin{equation*}\label{eqn:(q,k)-digamma}
\psi_{q,k}(t)=\frac{d}{dt}\ln \Gamma_{q,k}(t) =\frac{\Gamma'_{q,k}(t)}{\Gamma_{q,k}(t)}, \quad t>0, \, k>0, \, q\in(0,1).
\end{equation*}

\noindent
Also in 2012, Krasniqi and Merovci \cite{Krasniqu-Merovci-2012} gave the $(p,q)$-Gamma function, $\Gamma_{p,q}(t)$ as
\begin{equation*}\label{eqn:(p,q)-gamma}
\Gamma_{p,q}(t)=\frac{[p]_{q}^{t}[p]_{q}!}{[t]_{q}[t+1]_{q}\dots[t+p]_{q}} ,\quad t>0, \, p\in N, \, q\in(0,1).
\end{equation*}
where \, $[p]_{q}=\frac{1-q^p}{1-q}$.\\

\noindent
Similarly, the $(p,q)$-digamma function, $\psi_{p,q}(t)$  is defined as
\begin{equation*}\label{eqn:(p,q)-digamma}
\psi_{p,q}(t)=\frac{d}{dt}\ln \Gamma_{p,q}(t) =\frac{\Gamma'_{p,q}(t)}{\Gamma_{p,q}(t)}, \quad t>0, \, p\in N, \, q\in(0,1).
\end{equation*}

\noindent
The functions $\psi_{q,k}(t)$ and  $\psi_{p,q}(t)$  as defined above exhibit the following series representations.
\begin{align}
\psi_{q,k}(t)&=\frac{-\ln(1-q)}{k} + (\ln q)\sum_{n=1}^{\infty}\frac{q^{nkt}}{1-q^{nk}}, \qquad t>0. \label{eqn:(q,k)-series-digamma}\\
\psi_{p,q}(t)&= \ln[p]_q  + (\ln q)\sum_{n=1}^{p}\frac{q^{nt}}{1-q^{n}}, \qquad t>0. \label{eqn:(p,q)-series-digamma}
\end{align}

\noindent
By taking derivatives of these functions, it can easily be established that, 

\begin{align}
\psi_{q,k}'(t)&= (\ln q)^{2}\sum_{n=1}^{\infty}\frac{nk.q^{nkt}}{1-q^{nk}},\quad t>0 \label{eqn:(q,k)-series-digamma-prime} \\
\psi_{p,q}'(t)&= (\ln q)^{2}\sum_{n=1}^{p}\frac{n.q^{nt}}{1-q^{n}},\quad t>0. \label{eqn:(p,q)-series-digamma-prime} 
\end{align}

\noindent
In \cite{Nantomah-2014}, Nantomah presented the following results for the digamma function. 
\begin{equation}\label{eqn:digamma-ineq}
\frac{\left[ \psi(a)\right] ^{\alpha}}{\left[ \psi(c)\right] ^{\beta}} \leq
\frac{\left[ \psi(a+bt)\right] ^{\alpha}}{\left[ \psi(c+dt)\right] ^{\beta}} \leq
\frac{\left[ \psi(a+b)\right] ^{\alpha}}{\left[ \psi(c+d)\right] ^{\beta}}
\end{equation}
where  $a$, $b$,  $c$, $d$, $\alpha$, $\beta$ are  positive real numbers such that  $\beta d \leq \alpha b$, $ a+bt \leq c+dt$, $\psi(a+bt)>0$ and $\psi(c+dt)>0$. \\

\noindent
Also, the  $k$-analogue of these inequalities can  be found in \cite{Nantomah-Iddrisu-2014}.\\

\noindent
The purpose of this paper is to extend  inequalities ~(\ref{eqn:digamma-ineq}) to the $(q,k)$ and $ (p,q)$-digamma functions.


\section{Results and Discussion}
\noindent
We now present the results of this paper.

\begin{lemma}\label{lem:(q,k)-digamma-increasing}
Let  $0<s\leq t$, then the following statement is valid.
\begin{equation*}\label{eqn:(q,k)-digamma-increasing}
\psi_{q,k}(s) \leq \psi_{q,k}(t).   
\end{equation*}
\end{lemma}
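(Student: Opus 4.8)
The plan is to prove monotonicity of $\psi_{q,k}$ by showing that its derivative is strictly positive on $(0,\infty)$, and then to obtain the claimed inequality as an immediate consequence. Since $0 < s \leq t$ lies in the interval where $\psi_{q,k}$ is defined, establishing $\psi_{q,k}'(x) > 0$ for all $x > 0$ will give that $\psi_{q,k}$ is increasing, whence $\psi_{q,k}(s) \leq \psi_{q,k}(t)$ follows directly.

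First I would invoke the series representation for the derivative established earlier in the excerpt, namely
\begin{equation*}
\psi_{q,k}'(x) = (\ln q)^{2}\sum_{n=1}^{\infty}\frac{nk\,q^{nkx}}{1-q^{nk}}, \quad x > 0.
\end{equation*}
The key step is then a term-by-term sign analysis of this series under the standing hypotheses $k > 0$ and $q \in (0,1)$. I would check each factor in turn: $(\ln q)^{2} > 0$ since $\ln q \neq 0$; the factor $nk > 0$ since $n \geq 1$ and $k > 0$; the factor $q^{nkx} > 0$ as a positive base raised to a real power; and finally $1 - q^{nk} > 0$, because $q \in (0,1)$ forces $q^{nk} \in (0,1)$ for every $n$. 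Consequently every summand is strictly positive, so the entire sum is positive, giving $\psi_{q,k}'(x) > 0$ for all $x > 0$.

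With positivity of the derivative in hand, the final step is to conclude that $\psi_{q,k}$ is strictly increasing on $(0,\infty)$, and therefore $0 < s \leq t$ yields $\psi_{q,k}(s) \leq \psi_{q,k}(t)$, as desired. I do not anticipate a genuine obstacle here; the only point requiring any care is verifying $1 - q^{nk} > 0$, which is precisely where the hypothesis $q \in (0,1)$ is used and without which the sign of the summands could fail. The argument is thus a routine but clean application of the derivative series together with the monotonicity criterion for differentiable functions.
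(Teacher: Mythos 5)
Your proof is correct, but it takes a genuinely different route from the paper. The paper argues directly from the series representation of $\psi_{q,k}$ itself, equation~(\ref{eqn:(q,k)-series-digamma}), computing the difference
\begin{equation*}
\psi_{q,k}(s)-\psi_{q,k}(t)=(\ln q)\sum_{n=1}^{\infty}\frac{q^{nks}-q^{nkt}}{1-q^{nk}}\leq 0,
\end{equation*}
which is nonpositive because $\ln q<0$ while each summand is nonnegative ($q^{nks}\geq q^{nkt}$ for $s\leq t$ and $1-q^{nk}>0$ when $q\in(0,1)$). You instead prove positivity of the derivative series~(\ref{eqn:(q,k)-series-digamma-prime}) and then invoke the monotonicity criterion for differentiable functions. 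Both arguments are sound; your sign analysis of each factor is exactly right, and you correctly identify $1-q^{nk}>0$ as the point where $q\in(0,1)$ enters. What the paper's approach buys is self-containment and economy: it compares the two values in one line from~(\ref{eqn:(q,k)-series-digamma}) alone, with no appeal to differentiability, term-by-term differentiation, or the mean value theorem. What your approach buys is a slightly stronger conclusion (strict monotonicity) and a derivative-positivity fact that is in the same spirit as the ingredient needed for Lemma~\ref{lem:(q,k)-digamma-prime-decreasing}; the mild cost is that you lean on the validity of the termwise-differentiated series, which the paper asserts in~(\ref{eqn:(q,k)-series-digamma-prime}) but does not justify, so your route inherits that dependence while the paper's does not.
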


\begin{proof}
From ~(\ref{eqn:(q,k)-series-digamma}) we have,
\begin{equation*}
\psi_{q,k}(s) - \psi_{q,k}(t) = (\ln q)\sum_{n=1}^{\infty} \left[ \frac{q^{nks}-q^{nkt}}{1-q^{nk}} \right]\leq0.
\end{equation*}
\end{proof}

\begin{lemma}\label{lem:(q,k)-digamma-prime-decreasing}
Let  $0<s\leq t$, then the following statement is valid.
\begin{equation*}\label{eqn:(q,k)-digamma-prime-decreasing}
\psi_{q,k}'(s) \geq \psi_{q,k}'(t).   
\end{equation*}
\end{lemma}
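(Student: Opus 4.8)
The plan is to follow exactly the template of Lemma~\ref{lem:(q,k)-digamma-increasing}, but now working from the series~(\ref{eqn:(q,k)-series-digamma-prime}) for $\psi_{q,k}'$ instead of~(\ref{eqn:(q,k)-series-digamma}). First I would form the difference term-by-term, writing
\begin{equation*}
\psi_{q,k}'(s) - \psi_{q,k}'(t) = (\ln q)^{2}\sum_{n=1}^{\infty}\frac{nk\left(q^{nks}-q^{nkt}\right)}{1-q^{nk}},
\end{equation*}
which is legitimate since the series converges for each $t>0$ and the two series can be subtracted entrywise.

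The core of the argument is then a sign analysis of a single summand, carried out factor by factor. Since $q\in(0,1)$ we have $(\ln q)^{2}>0$; since $n\geq 1$ and $k>0$ we have $nk>0$; and since $0<q<1$ with $nk>0$ we have $0<q^{nk}<1$, so the denominator $1-q^{nk}>0$. The only factor whose sign depends on the hypothesis is $q^{nks}-q^{nkt}$, and here I would invoke that $x\mapsto q^{x}$ is decreasing on $(0,\infty)$ when $q\in(0,1)$: from $0<s\leq t$ we get $nks\leq nkt$, hence $q^{nks}\geq q^{nkt}$, i.e. $q^{nks}-q^{nkt}\geq 0$. Multiplying the four nonnegative factors shows every term of the series is nonnegative.

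Consequently the entire sum is nonnegative, and since it is multiplied by the positive constant $(\ln q)^{2}$, we conclude $\psi_{q,k}'(s)-\psi_{q,k}'(t)\geq 0$, which is the claimed inequality. I do not expect any genuine obstacle here; the one point worth noting is that the sign bookkeeping differs from Lemma~\ref{lem:(q,k)-digamma-increasing}, where the prefactor $\ln q$ was \emph{negative} and forced a reversed inequality. In the present lemma the prefactor is $(\ln q)^{2}$, which is positive, so the monotonicity comes out in the opposite direction, namely $\psi_{q,k}'$ is decreasing. The main thing to be careful about is therefore simply tracking that the squared logarithm, together with the extra factor $nk$, keeps every summand nonnegative rather than nonpositive.
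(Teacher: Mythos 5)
Your proof is correct and follows exactly the paper's argument: both form the difference via the series representation~(\ref{eqn:(q,k)-series-digamma-prime}) and observe that every summand is nonnegative since $(\ln q)^{2}>0$, $nk>0$, $1-q^{nk}>0$, and $q^{nks}\geq q^{nkt}$ for $s\leq t$. Your version merely spells out the sign analysis that the paper leaves implicit.
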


\begin{proof}
From ~(\ref{eqn:(q,k)-series-digamma-prime}) we have,
\begin{equation*}
\psi_{q,k}'(s) - \psi_{q,k}'(t) = (\ln q)^{2}\sum_{n=1}^{\infty} \left[ \frac{nk(q^{nks}-q^{nkt})}{1-q^{nk}} \right]\geq0.
\end{equation*}
\end{proof}


\begin{lemma}\label{lem:(q,k)-digammas}
Let  $a$, $b$,  $c$, $d$, $\alpha$, $\beta$ be  positive real numbers such that  $a+bt \leq c+dt$, $\beta d \leq \alpha b$,  $\psi_{q,k}(a+bt)>0$ and $\psi_{q,k}(c+dt)>0$. Then
\begin{equation*}
\alpha b \psi_{q,k}(c+dt)\psi'_{q,k}(a+bt) - \beta d \psi_{q,k}(a+bt)\psi'_{q,k}(c+dt)\geq0.    
\end{equation*}
\end{lemma}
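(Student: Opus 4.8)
The plan is to recognize the target expression as a difference of two positive products and to dominate the second product by the first using three separate monotonicity inequalities that I then multiply together.

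First I would check that $\psi'_{q,k}(t)>0$ for every $t>0$. This is immediate from the series representation~(\ref{eqn:(q,k)-series-digamma-prime}): since $q\in(0,1)$ we have $q^{nk}\in(0,1)$, so each summand $\frac{nk\,q^{nkt}}{1-q^{nk}}$ is strictly positive, and the prefactor $(\ln q)^2$ is positive as well. Hence both $\psi'_{q,k}(a+bt)$ and $\psi'_{q,k}(c+dt)$ are positive. Together with the hypotheses $\psi_{q,k}(a+bt)>0$ and $\psi_{q,k}(c+dt)>0$, this confirms that all four function values appearing in the claim are strictly positive, as are the coefficients $\alpha b$ and $\beta d$.

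Next I would apply the two preceding lemmas with the substitution $s=a+bt$ and $t=c+dt$, which is legitimate because the hypothesis $a+bt\leq c+dt$ supplies the required ordering $s\leq t$. Lemma~\ref{lem:(q,k)-digamma-increasing} then yields $\psi_{q,k}(a+bt)\leq\psi_{q,k}(c+dt)$, while Lemma~\ref{lem:(q,k)-digamma-prime-decreasing} yields $\psi'_{q,k}(a+bt)\geq\psi'_{q,k}(c+dt)$. Adjoining the given inequality $\beta d\leq\alpha b$, I would have the three chains of inequalities among strictly positive quantities
\begin{equation*}
\alpha b \geq \beta d > 0, \qquad \psi_{q,k}(c+dt) \geq \psi_{q,k}(a+bt) > 0, \qquad \psi'_{q,k}(a+bt) \geq \psi'_{q,k}(c+dt) > 0.
\end{equation*}

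Finally I would multiply these three inequalities. Since every factor involved is positive, the products preserve the ordering (one may verify this by replacing factors one at a time), giving
\begin{equation*}
\alpha b\,\psi_{q,k}(c+dt)\,\psi'_{q,k}(a+bt) \geq \beta d\,\psi_{q,k}(a+bt)\,\psi'_{q,k}(c+dt),
\end{equation*}
which is exactly the asserted inequality after moving the second term to the left. There is no genuine difficulty in this argument; the only point demanding care is the correct pairing of factors, namely matching the larger coefficient $\alpha b$ with $\psi_{q,k}(c+dt)$ (the larger value, since $\psi_{q,k}$ is increasing) and with $\psi'_{q,k}(a+bt)$ (the larger value, since $\psi'_{q,k}$ is decreasing), so that the three inequalities all point the same way before multiplying.
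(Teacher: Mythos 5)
Your proposal is correct and follows essentially the same route as the paper: both reduce the claim to the three positive inequalities $\alpha b \geq \beta d$, $\psi_{q,k}(c+dt) \geq \psi_{q,k}(a+bt) > 0$ (Lemma \ref{lem:(q,k)-digamma-increasing}) and $\psi'_{q,k}(a+bt) \geq \psi'_{q,k}(c+dt) > 0$ (Lemma \ref{lem:(q,k)-digamma-prime-decreasing}), and then multiply them, the paper simply writing out the factor-by-factor replacement chain that you invoke in passing. Your explicit verification from the series~(\ref{eqn:(q,k)-series-digamma-prime}) that $\psi'_{q,k}>0$ is a minor gain in rigor over the paper, which asserts this positivity without comment.
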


\begin{proof}
Since $0<a+bt \leq c+dt$, then by Lemmas \ref{lem:(q,k)-digamma-increasing} and \ref{lem:(q,k)-digamma-prime-decreasing} we have,\\
$0<\psi_{q,k}(a+bt)\leq \psi_{q,k}(c+dt)$ and $\psi'_{q,k}(a+bt)\geq \psi'_{q,k}(c+dt)>0$. \\ Then that implies;\\
$\psi_{q,k}(c+dt)\psi'_{q,k}(a+bt) \geq \psi_{q,k}(c+dt)\psi'_{q,k}(c+dt) \geq \psi_{q,k}(a+bt)\psi'_{q,k}(c+dt)$.\\ Further, $\alpha b \geq \beta d$ implies;\\
$\alpha b \psi_{q,k}(c+dt)\psi'_{q,k}(a+bt) \geq \alpha b \psi_{q,k}(a+bt)\psi'_{q,k}(c+dt)  \geq \beta d \psi_{q,k}(a+bt)\psi'_{q,k}(c+dt)$. \\ Hence,\\
$\alpha b \psi_{q,k}(c+dt)\psi'_{q,k}(a+bt) - \beta d \psi_{q,k}(a+bt)\psi'_{q,k}(c+dt)\geq0$. 
\end{proof}

\begin{theorem}\label{thm:(q,k)-digamma-funct}
Define a function $G$ by 
\begin{equation}\label{eqn:(q,k)-digamma-funct} 
G(t)=\frac{\left[ \psi_{q,k}(a+bt)\right] ^{\alpha}}{\left[ \psi_{q,k}(c+dt)\right] ^{\beta}},  \quad t\in [0,\infty)
\end{equation}
where  $a$, $b$,  $c$, $d$, $\alpha$, $\beta$ are  positive real numbers such that  $a+bt \leq c+dt$, $\beta d \leq \alpha b$,  $\psi_{q,k}(a+bt)>0$ and $\psi_{q,k}(c+dt)>0$. Then $G$ is nondecreasing on $t\in[0,\infty)$ and the inequalities
\begin{equation}\label{eqn:(q,k)-digamma-funct-ineq}
\frac{\left[ \psi_{q,k}(a)\right] ^{\alpha}}{\left[ \psi_{q,k}(c)\right] ^{\beta}} \leq
\frac{\left[ \psi_{q,k}(a+bt)\right] ^{\alpha}}{\left[ \psi_{q,k}(c+dt)\right] ^{\beta}} \leq
\frac{\left[ \psi_{q,k}(a+b)\right] ^{\alpha}}{\left[ \psi_{q,k}(c+d)\right] ^{\beta}}
\end{equation}
are valid for every $t\in[0,1]$.
\end{theorem}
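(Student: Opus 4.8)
The plan is to show that $G$ is nondecreasing by examining the sign of $G'$, and then to read off the two inequalities by evaluating $G$ at the endpoints of $[0,1]$. Since both $\psi_{q,k}(a+bt)$ and $\psi_{q,k}(c+dt)$ are assumed positive, we have $G(t)>0$ throughout $[0,\infty)$, so it is convenient to differentiate logarithmically. First I would write
\[
\ln G(t) = \alpha \ln \psi_{q,k}(a+bt) - \beta \ln \psi_{q,k}(c+dt),
\]
and apply the chain rule to obtain
\[
\frac{G'(t)}{G(t)} = \frac{\alpha b\, \psi'_{q,k}(a+bt)}{\psi_{q,k}(a+bt)} - \frac{\beta d\, \psi'_{q,k}(c+dt)}{\psi_{q,k}(c+dt)}.
\]

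Next I would place the two terms over the common denominator $\psi_{q,k}(a+bt)\,\psi_{q,k}(c+dt)$, which is strictly positive by hypothesis. The resulting numerator is precisely
\[
\alpha b\, \psi_{q,k}(c+dt)\psi'_{q,k}(a+bt) - \beta d\, \psi_{q,k}(a+bt)\psi'_{q,k}(c+dt),
\]
which is nonnegative by Lemma \ref{lem:(q,k)-digammas}. Since $G(t)>0$ as well, it follows that $G'(t)\geq 0$ for every $t\in[0,\infty)$, so $G$ is nondecreasing there. Restricting to $t\in[0,1]$ and using this monotonicity gives $G(0)\leq G(t)\leq G(1)$; evaluating $G$ at $t=0$ and $t=1$ produces the left and right members of the claimed chain~(\ref{eqn:(q,k)-digamma-funct-ineq}), which completes the argument.

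I expect no serious obstacle in this proof itself: the substantive inequality is carried entirely by Lemma \ref{lem:(q,k)-digammas}, which in turn rests on the monotonicity Lemmas \ref{lem:(q,k)-digamma-increasing} and \ref{lem:(q,k)-digamma-prime-decreasing}, and once that inequality is available the present argument is a routine logarithmic-differentiation computation. The only point meriting care is to ensure that the structural hypotheses $\alpha b \geq \beta d$ and $a+bt\leq c+dt$ are in force at each $t$ in the range under consideration, so that Lemma \ref{lem:(q,k)-digammas} may be invoked pointwise; the positivity assumptions on $\psi_{q,k}(a+bt)$ and $\psi_{q,k}(c+dt)$ then guarantee that both $G$ and the common denominator stay strictly positive, so the sign of $G'$ is determined solely by the numerator and no degeneracy can occur.
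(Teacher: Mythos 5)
Your proposal is correct and matches the paper's own proof essentially line for line: both take the logarithmic derivative of $G$, combine the two terms over the positive common denominator $\psi_{q,k}(a+bt)\,\psi_{q,k}(c+dt)$, invoke Lemma \ref{lem:(q,k)-digammas} for the sign of the numerator, and then read off the chain~(\ref{eqn:(q,k)-digamma-funct-ineq}) from $G(0)\leq G(t)\leq G(1)$ on $[0,1]$. No further comment is needed.
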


\begin{proof}
Let $g(t)=\ln G(t)$ for every $t\in[0,\infty)$. Then,
\begin{align*}
g &=\ln \frac{\left[ \psi_{q,k}(a+bt)\right] ^{\alpha}}{\left[ \psi_{q,k}(c+dt)\right] ^{\beta}}
        = \alpha \ln \psi_{q,k}(a+bt) - \beta \ln \psi_{q,k}(c+dt)   \\
\intertext{and}
g'(t)&= \alpha b \frac{\psi'_{q,k}(a+bt)}{\psi_{q,k}(a+bt)} - \beta d \frac{\psi'_{q,k}(c+dt)}{\psi_{q,k}(c+dt)} \\
      &= \frac{\alpha b \psi'_{q,k}(a+bt)\psi_{q,k}(c+dt) - \beta d \psi'_{q,k}(c+dt)\psi_{q,k}(a+bt) }{\psi_{q,k}(a+bt) \psi_{q,k}(c+dt) }\geq0 \quad
\end{align*}
as a result of Lemma \ref{lem:(q,k)-digammas}.  
That implies $g$ as well as  $G$ are nondecreasing on $t\in[0,\infty)$ and  for every $t\in[0,1]$ we have, 
\begin{equation*}
G(0) \leq G(t) \leq G(1)   
\end{equation*}
concluding the proof.
\end{proof}

\begin{corollary}
If $t\in(1,\infty)$, then the following inequality is valid.
\end{corollary}
\begin{equation}
\frac{\left[ \psi_{q,k}(a+bt)\right] ^{\alpha}}{\left[ \psi_{q,k}(c+dt)\right] ^{\beta}} \geq
\frac{\left[ \psi_{q,k}(a+b)\right] ^{\alpha}}{\left[ \psi_{q,k}(c+d)\right] ^{\beta}}
\end{equation}

\begin{proof}
For each $t\in(1,\infty)$, we have \,$G(t)\geq G(1)$\, yielding the result.
\end{proof}

\begin{lemma}\label{lem:(p,q)-digamma-increasing}
Let  $0<s\leq t$, then the following statement is valid.
\begin{equation*}\label{eqn:(p,q)-digamma-increasing}
\psi_{p,q}(s) \leq \psi_{p,q}(t).   
\end{equation*}
\end{lemma}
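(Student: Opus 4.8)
The plan is to mirror the proof of Lemma~\ref{lem:(q,k)-digamma-increasing}, substituting the series representation of $\psi_{p,q}$ for that of $\psi_{q,k}$. First I would invoke the representation~(\ref{eqn:(p,q)-series-digamma}) and form the difference
\begin{equation*}
\psi_{p,q}(s) - \psi_{p,q}(t) = (\ln q)\sum_{n=1}^{p}\left[\frac{q^{ns}-q^{nt}}{1-q^{n}}\right],
\end{equation*}
observing that the constant term $\ln[p]_q$ cancels. The fact that the sum here is finite (running to $p$ rather than to infinity) causes no difficulty whatsoever and in fact removes any convergence concern that might otherwise need checking.

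Next I would carry out a term-by-term sign analysis of this finite sum. Since $q\in(0,1)$, the map $x\mapsto q^{x}$ is strictly decreasing, so from $0<s\leq t$ we obtain $q^{ns}\geq q^{nt}$ for each $n$, whence the numerator $q^{ns}-q^{nt}\geq0$. Moreover $q\in(0,1)$ forces $q^{n}<1$, so each denominator $1-q^{n}$ is strictly positive. Consequently every summand is nonnegative, and the entire sum is nonnegative.

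The one point deserving care — and essentially the only place the argument could go astray if treated carelessly — is the sign of the prefactor $\ln q$. Because $q\in(0,1)$ we have $\ln q<0$, so multiplying the nonnegative sum by $\ln q$ reverses the sign and yields $\psi_{p,q}(s)-\psi_{p,q}(t)\leq0$, that is, $\psi_{p,q}(s)\leq\psi_{p,q}(t)$, as required. I do not anticipate any genuine obstacle: the whole proof reduces to this short sign computation, structurally identical to the $(q,k)$ case already handled above.
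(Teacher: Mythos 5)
Your proposal is correct and is essentially the paper's own proof: the paper likewise subtracts the two series from~(\ref{eqn:(p,q)-series-digamma}), obtaining $(\ln q)\sum_{n=1}^{p}\left[\frac{q^{ns}-q^{nt}}{1-q^{n}}\right]\leq 0$, with the sign determined exactly as you argue. Your term-by-term sign analysis and the remark that $\ln q<0$ merely spell out what the paper leaves implicit.
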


\begin{proof}
From ~(\ref{eqn:(p,q)-series-digamma}) we have,
\begin{equation*}
\psi_{p,q}(s) - \psi_{p,q}(t) = (\ln q)\sum_{n=1}^{p} \left[ \frac{q^{ns}-q^{nt}}{1-q^{n}} \right]\leq0.
\end{equation*}
\end{proof}

\begin{lemma}\label{lem:(p,q)-digamma-prime-decreasing}
Let  $0<s\leq t$, then the following statement is valid.
\begin{equation*}\label{eqn:(p,q)-digamma-prime-decreasing}
\psi_{p,q}'(s) \geq \psi_{p,q}'(t).   
\end{equation*}
\end{lemma}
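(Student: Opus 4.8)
The plan is to mirror exactly the argument already used for Lemma~\ref{lem:(q,k)-digamma-prime-decreasing}, now working from the finite series representation~(\ref{eqn:(p,q)-series-digamma-prime}) in place of the infinite one. First I would form the difference $\psi_{p,q}'(s) - \psi_{p,q}'(t)$ by subtracting the two series term by term. This manipulation requires no justification beyond finiteness, since the sum runs over the finite range $n = 1, \dots, p$ and so there are no convergence or rearrangement issues. This yields
\[
\psi_{p,q}'(s) - \psi_{p,q}'(t) = (\ln q)^{2}\sum_{n=1}^{p} \left[ \frac{n(q^{ns}-q^{nt})}{1-q^{n}} \right].
\]

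Next I would verify that every factor in each summand is non-negative, so that the whole sum is non-negative. Since $q\in(0,1)$ we have $q\neq 1$, hence the prefactor $(\ln q)^{2}$ is strictly positive. For each index $n$ in the range, $n>0$ and $q^{n}<1$, so the denominator $1-q^{n}>0$. Finally, the hypothesis $0<s\leq t$ together with $q\in(0,1)$ gives $q^{ns}\geq q^{nt}$, because $x\mapsto q^{x}$ is decreasing for such $q$; therefore $q^{ns}-q^{nt}\geq 0$. Consequently each term is non-negative, the sum is $\geq 0$, and we conclude $\psi_{p,q}'(s) \geq \psi_{p,q}'(t)$.

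I do not anticipate any genuine obstacle here: the statement is simply the $(p,q)$-analogue of the already-established Lemma~\ref{lem:(q,k)-digamma-prime-decreasing}, and the finiteness of the sum makes it even cleaner than its counterpart, as no tail estimate is needed. The only point warranting care is to invoke the monotonicity of $x\mapsto q^{x}$ correctly for $q\in(0,1)$ when justifying $q^{ns}-q^{nt}\geq 0$; reversing the direction of that inequality would be the one easy slip to avoid.
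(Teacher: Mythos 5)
Your proof is correct and is essentially identical to the paper's: the paper likewise subtracts the two finite series from~(\ref{eqn:(p,q)-series-digamma-prime}) and observes that $(\ln q)^{2}\sum_{n=1}^{p} \left[ \frac{n(q^{ns}-q^{nt})}{1-q^{n}} \right]\geq0$. Your additional remarks justifying the sign of each factor merely make explicit what the paper leaves implicit.
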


\begin{proof}
From ~(\ref{eqn:(p,q)-series-digamma-prime}) we have,
\begin{equation*}
\psi_{p,q}'(s) - \psi_{p,q}'(t) = (\ln q)^{2}\sum_{n=1}^{p} \left[ \frac{n(q^{ns}-q^{nt})}{1-q^{n}} \right]\geq0.
\end{equation*}
\end{proof}


\begin{lemma}\label{lem:(p,q)-digammas}
Let  $a$, $b$,  $c$, $d$, $\alpha$, $\beta$ be  positive real numbers such that  $a+bt \leq c+dt$, $\beta d \leq \alpha b$,  $\psi_{p,q}(a+bt)>0$ and $\psi_{p,q}(c+dt)>0$. Then
\begin{equation*}
\alpha b \psi_{p,q}(c+dt)\psi'_{p,q}(a+bt) - \beta d \psi_{p,q}(a+bt)\psi'_{p,q}(c+dt)\geq0.    
\end{equation*}
\end{lemma}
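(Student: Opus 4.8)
The plan is to mirror exactly the argument used for Lemma \ref{lem:(q,k)-digammas}, replacing the $(q,k)$-digamma function by its $(p,q)$-counterpart and invoking the $(p,q)$-versions of the monotonicity lemmas. First I would note that the hypothesis $a+bt \leq c+dt$ together with positivity of all the parameters gives $0 < a+bt \leq c+dt$, so that Lemma \ref{lem:(p,q)-digamma-increasing} yields $0 < \psi_{p,q}(a+bt) \leq \psi_{p,q}(c+dt)$ and Lemma \ref{lem:(p,q)-digamma-prime-decreasing} yields $\psi'_{p,q}(a+bt) \geq \psi'_{p,q}(c+dt)$.

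The one extra fact needed, not recorded as a separate lemma, is that $\psi'_{p,q}(c+dt) > 0$. This follows directly from the series representation ~(\ref{eqn:(p,q)-series-digamma-prime}): since $q \in (0,1)$ we have $(\ln q)^{2} > 0$, while each summand $\frac{n\,q^{n(c+dt)}}{1-q^{n}}$ is strictly positive because $0 < q^{n} < 1$. Consequently every quantity used as a multiplier below is positive, which is precisely what preserves the inequality directions under multiplication.

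Next I would chain the two monotonicity facts. Multiplying $\psi'_{p,q}(a+bt) \geq \psi'_{p,q}(c+dt)$ by the positive quantity $\psi_{p,q}(c+dt)$, and multiplying $\psi_{p,q}(c+dt) \geq \psi_{p,q}(a+bt)$ by the positive quantity $\psi'_{p,q}(c+dt)$, gives
\begin{equation*}
\psi_{p,q}(c+dt)\psi'_{p,q}(a+bt) \geq \psi_{p,q}(c+dt)\psi'_{p,q}(c+dt) \geq \psi_{p,q}(a+bt)\psi'_{p,q}(c+dt).
\end{equation*}
Finally, using $\alpha b \geq \beta d$ together with the positivity of the product on the right, I would scale through to obtain $\alpha b\, \psi_{p,q}(c+dt)\psi'_{p,q}(a+bt) \geq \beta d\, \psi_{p,q}(a+bt)\psi'_{p,q}(c+dt)$, which rearranges at once to the claimed inequality.

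There is no serious obstacle here, as the result is a direct transcription of Lemma \ref{lem:(q,k)-digammas}. The only point demanding a moment of care is the bookkeeping of signs: because the two factors $\psi_{p,q}$ and $\psi'_{p,q}$ are monotone in \emph{opposite} directions, one must verify that the multipliers invoked at each step are nonnegative (guaranteed by the positivity established above) so that no inequality is inadvertently reversed.
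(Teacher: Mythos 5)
Your proposal is correct and takes essentially the same approach as the paper, which simply invokes the argument of Lemma \ref{lem:(q,k)-digammas} verbatim: chain the two monotonicity lemmas through the intermediate product $\psi_{p,q}(c+dt)\psi'_{p,q}(c+dt)$ and then scale by $\alpha b \geq \beta d$. Your explicit verification that $\psi'_{p,q}(c+dt)>0$ via the series ~(\ref{eqn:(p,q)-series-digamma-prime}) is a small detail the paper leaves implicit, and it is a welcome addition.
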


\begin{proof}
Follows the same argument as in the proof of Lemma \ref{lem:(q,k)-digammas}.
\end{proof}

\begin{theorem}\label{thm:(p,q)-digamma-funct}
Define a function $H$ by 
\begin{equation}\label{eqn:(p,q)-digamma-funct} 
H(t)=\frac{\left[ \psi_{p,q}(a+bt)\right] ^{\alpha}}{\left[ \psi_{p,q}(c+dt)\right] ^{\beta}},  \quad t\in [0,\infty)
\end{equation}
where  $a$, $b$,  $c$, $d$, $\alpha$, $\beta$ are  positive real numbers such that  $a+bt \leq c+dt$, $\beta d \leq \alpha b$,  $\psi_{p,q}(a+bt)>0$ and $\psi_{p,q}(c+dt)>0$. Then $H$ is nondecreasing on $t\in[0,\infty)$ and the inequalities
\begin{equation}\label{eqn:(p,q)-digamma-funct-ineq}
\frac{\left[ \psi_{p,q}(a)\right] ^{\alpha}}{\left[ \psi_{p,q}(c)\right] ^{\beta}} \leq
\frac{\left[ \psi_{p,q}(a+bt)\right] ^{\alpha}}{\left[ \psi_{p,q}(c+dt)\right] ^{\beta}} \leq
\frac{\left[ \psi_{p,q}(a+b)\right] ^{\alpha}}{\left[ \psi_{p,q}(c+d)\right] ^{\beta}}
\end{equation}
are valid for every $t\in[0,1]$.
\end{theorem}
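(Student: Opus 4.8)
The plan is to mirror, verbatim in structure, the argument already given for Theorem~\ref{thm:(q,k)-digamma-funct}, since the $(p,q)$ setting has been furnished with exact analogues of every ingredient needed. The whole strategy is to pass to logarithms, differentiate, and read off the sign of the derivative from Lemma~\ref{lem:(p,q)-digammas}. First I would set $h(t)=\ln H(t)$ for $t\in[0,\infty)$; this is legitimate because the hypotheses $\psi_{p,q}(a+bt)>0$ and $\psi_{p,q}(c+dt)>0$ guarantee that $H(t)>0$, so the logarithm is well-defined and differentiable on the whole ray. Expanding the quotient and the powers gives $h(t)=\alpha\ln\psi_{p,q}(a+bt)-\beta\ln\psi_{p,q}(c+dt)$.

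Next I would differentiate, applying the chain rule so that the inner derivatives $b$ and $d$ appear as factors, and then combine the two resulting terms over the common denominator $\psi_{p,q}(a+bt)\,\psi_{p,q}(c+dt)$. This produces
\begin{equation*}
h'(t)=\frac{\alpha b\,\psi'_{p,q}(a+bt)\psi_{p,q}(c+dt)-\beta d\,\psi'_{p,q}(c+dt)\psi_{p,q}(a+bt)}{\psi_{p,q}(a+bt)\,\psi_{p,q}(c+dt)}.
\end{equation*}
The denominator is strictly positive by the positivity hypotheses, and the numerator is precisely the quantity shown to be nonnegative in Lemma~\ref{lem:(p,q)-digammas}. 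Hence $h'(t)\geq0$ throughout $[0,\infty)$, so $h$, and therefore $H=e^{h}$, is nondecreasing on $[0,\infty)$.

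Finally, monotonicity does the rest: for every $t\in[0,1]$ we have $H(0)\leq H(t)\leq H(1)$, and writing these three values of $H$ out explicitly from the definition~\eqref{eqn:(p,q)-digamma-funct} yields exactly the chain of inequalities~\eqref{eqn:(p,q)-digamma-funct-ineq}. I do not expect any genuine obstacle here, because Lemma~\ref{lem:(p,q)-digammas} already encapsulates the only nontrivial estimate; the sole points requiring care are the bookkeeping of the chain-rule factors $b$ and $d$ and the verification that the combined numerator matches the Lemma's expression term for term. One could equally argue by the intermediate-value form of the mean value theorem on $[0,t]$ and $[t,1]$, but the direct sign-of-derivative route is cleaner and reuses the earlier lemmas with no modification.
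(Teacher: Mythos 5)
Your proposal is correct and is essentially the paper's own proof: the paper proves Theorem~\ref{thm:(p,q)-digamma-funct} by invoking the same logarithmic-differentiation argument used for Theorem~\ref{thm:(q,k)-digamma-funct}, with Lemma~\ref{lem:(p,q)-digammas} supplying the nonnegativity of the numerator of $h'(t)$, exactly as you do. Your write-up merely makes explicit the steps the paper leaves as ``follows the same procedure,'' with no difference in substance.
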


\begin{proof}
Follows the same procedure as in Theorem \ref{thm:(q,k)-digamma-funct}. Using Lemma \ref{lem:(p,q)-digammas}, we conclude that  $H$ is nondecreasing on $t\in[0,\infty)$ and  for every $t\in[0,1]$ we have, 
\begin{equation*}
H(0) \leq H(t) \leq H(1)   
\end{equation*}
ending the proof.
\end{proof}

\begin{corollary}
If $t\in(1,\infty)$, then the following inequality is valid.
\end{corollary}
\begin{equation}
\frac{\left[ \psi_{p,q}(a+bt)\right] ^{\alpha}}{\left[ \psi_{p,q}(c+dt)\right] ^{\beta}} \geq
\frac{\left[ \psi_{p,q}(a+b)\right] ^{\alpha}}{\left[ \psi_{p,q}(c+d)\right] ^{\beta}}
\end{equation}

\begin{proof}
For each $t\in(1,\infty)$, we have \,$H(t)\geq H(1)$\, yielding the result.
\end{proof}

\section{Concluding Remarks}
We dedicate this section to some remarks concerning our results.
\begin{remark}
If in  ~(\ref{eqn:(q,k)-digamma-funct-ineq})  we allow  $k\rightarrow 1$,  then we obtain the $q$-analogue of  ~(\ref{eqn:digamma-ineq}).
\end{remark}

\begin{remark}
If in  ~(\ref{eqn:(q,k)-digamma-funct-ineq})  we allow  $q\rightarrow 1^-$,   then we obtain the $k$-analogue of  ~(\ref{eqn:digamma-ineq}) as presented  in Theorem 3.7 of the paper \cite{Nantomah-Iddrisu-2014}.
\end{remark}

\begin{remark}
If in  ~(\ref{eqn:(q,k)-digamma-funct-ineq})  we allow  $q\rightarrow 1^-$ as $k\rightarrow 1$,   then we obtain ~(\ref{eqn:digamma-ineq}).
\end{remark}

\begin{remark}
If  in ~(\ref{eqn:(p,q)-digamma-funct-ineq}) we allow  $q\rightarrow 1^-$, then we obtain the $p$-analogue of  ~(\ref{eqn:digamma-ineq}).
\end{remark}

\begin{remark}
If  in ~(\ref{eqn:(p,q)-digamma-funct-ineq}) we allow $p\rightarrow \infty$, then we obtain the $q$-analogue of  ~(\ref{eqn:digamma-ineq}).
\end{remark}

\begin{remark}
If  in ~(\ref{eqn:(p,q)-digamma-funct-ineq}) we allow  $p\rightarrow \infty$ as $q\rightarrow 1^-$, then we obtain ~(\ref{eqn:digamma-ineq}).\\
\end{remark}

\noindent
{\bf Conflict of Interests.} The authors declare that there is no conflict of interests.

\bibliographystyle{plain}


\end{document}